\newcommand{\assign}{:=}
\def\RR{{\mathbb R}}
\newcommand{\BCH}{\operatorname{BCH}}
\newcommand{\tmop}[1]{\ensuremath{\operatorname{#1}}}
\newcommand{\tmtextit}[1]{\text{{\itshape{#1}}}}
\newtheorem{theorem}{Theorem}
\newtheorem{definition}[theorem]{Definition}
\newtheorem{corollary}[theorem]{Corollary}
\newtheorem{example}[theorem]{Example} 
\newtheorem{remark}[theorem]{Remark} 
\numberwithin{theorem}{section}
\title{Rectifiable paths with polynomial log-signature are straight lines}
\author{Peter K. Friz}
\address{Technische Universit\"at Berlin, WIAS Berlin,
  Strasse des 17. Juni 136, 10623 Berlin, Germany
}
\email{friz@math.tu-berlin.de}
\thanks{}
\author{Terry Lyons}
\address{University of Oxford,
Radcliffe Observatory, Andrew Wiles Building, Woodstock Rd, Oxford OX2 6GG, UK}
\email{tlyons@maths.ox.ac.uk}
\thanks{}
\author{Anna Seigal}
\address{Harvard University, 29 Oxford Street, Pierce Hall 324, Cambridge, MA 02138, USA}
\email{aseigal@seas.harvard.edu}
\thanks{}
\subjclass[2020]{60L10, 60L70, 60G10, 17B01, 15A69, 13P15}
\keywords{Signature, log-signature, Lie series}
\begin{document}

\maketitle

\begin{abstract}
The signature of a rectifiable path is a tensor
series in the tensor algebra whose coefficients are definite iterated integrals of the path. 
The signature characterises the path up to a generalised form of reparametrisation.
It is a
classical result of K. T. Chen
that the log-signature (the logarithm of the signature) is a Lie series. 
A Lie series is polynomial if it has finite degree.
We show that the log-signature is polynomial if and only if the path is a straight line up to reparametrisation.
Consequently, the log-signature of a rectifiable path either has degree one or infinite support. 
Though our result pertains to rectifiable paths, the proof uses results from rough path theory, in particular that
the signature characterises a rough path up to reparametrisation.
\end{abstract}

\section{Introduction and main result}

Classically,
a path in $\RR^d$ is a function $\gamma: [t_0,t_1] \to \RR^d$, with $\gamma(t)$ the value of the path at time~$t$.
We call a path 
{\em rectifiable} if it is continuous and of bounded variation.
The tensor algebra $T
((\mathbb{R}^d))$ 
is the space of tensor series over $\RR^d$.
It is isomorphic to the space of power series in~$d$
non-commuting indeterminates. 
The \emph{signature} of a path is a tensor series in the tensor algebra. 
The entries of the signature are features that encode the path.
The signature is the terminal solution (at time $t = t_1)$ to the controlled differential equation 
\begin{equation} 
\label{eqn:cde} d
\mathbf{S} = \mathbf{S} \otimes d \gamma, \qquad  \mathbf{S}(t_0) = 1 \in T
((\mathbb{R}^d)).
\end{equation} 
We denote the signature of the path $\gamma$ by $\tmop{Sig} (\gamma) := \mathbf{S}(t_1)$.
It is also called the {\em signature transform} of $\gamma$ (on $[t_0,t_1]$).
The level $N$
truncation of the tensor algebra is $T^{(N)} (\mathbb{R}^d):= \bigoplus_{k=0}^N(\RR^d)^{\otimes k}$.
Projecting $\tmop{Sig} (\gamma)$ to level~$k$ gives a tensor in $(\RR^d)^{\otimes k}$.
In coordinates, its coefficient at position $(i_1, \ldots, i_k)$ is the iterated integral
\begin{equation}
\label{eqn:first}
\int_{s_1 =0}^{t_1} \left( \int_{s_2=0}^{s_1} \cdots \left( \int_{s_k = t_0}^{s_{k-1}} d \gamma_{i_k} ( s_k) \right) \cdots d\gamma_{i_{2}}(s_{2}) \right) d\gamma_{i_1}(s_1) ,
\end{equation}
with all integrals understood in classical Riemann-Stieltjes sense. 
The signature can be defined on any sub-interval of $[t_0, t_1]$.
The {\em indefinite signature} of $\gamma$ 
is the path $\mathbf{S} : [t_0,t_1] \to T((\RR^d))$, which satisfies $\mathbf{S}(t) = \tmop{Sig}(\gamma|_{[t_0, t]})$,
where $\gamma |_{[t_0, t]}$ restricts the path $\gamma$ to the interval $[t_0,t]$. 
The signature can be truncated to give an approximate encoding of a path.
For example, truncating to level one approximates $\gamma$ by the chord $\gamma(t_1) - \gamma(t_0) 
\in \RR^d$. 

The tensor algebra $T((\RR^d))$ has a Lie bracket $[v, w] = v \otimes w - w \otimes v$.
We denote by $[\mathbb{R}^d, \mathbb{R}^d]$ those elements of $(\RR^d)^{\otimes 2}$ that are linear combinations of terms of the form $[v, w]$, for $v, w \in \RR^d$. These are the skew-symmetric matrices. Extending to all levels $k \geq 1$, a \tmtextit{Lie series} 
is a tensor series obtained by iteratively taking Lie
brackets and linear combinations. The space of Lie series is
$$ L ((\mathbb{R}^d)) =\mathbb{R}^d \oplus [\mathbb{R}^d, \mathbb{R}^d]
\oplus [ \RR^d, [ \RR^d, \RR^d]] + \ldots.$$

The logarithm of the signature is the \emph{log-signature}, denoted $\tmop{logSig}(\gamma)$. It is also called the {\em log-signature transform} of $\gamma$ (on $[t_0,t_1]$).
 It can be
   obtained by evaluating the power series $\log(1 + S) = \sum_{k \geq 1} \frac1k (-1)^{k-1} S^{\otimes k}$ on the signature $1 + S$. 
   It is well-known that the log-signature lies in the space of Lie series; see Chen~\cite{chen1957integration}.

   \begin{example}
       The log-signature at level two lies in $[\RR^d, \RR^d]$, as follows.
We compute
$\tmop{logSig} (\gamma)_{ij} = \tmop{Sig} (\gamma)_{ij} - \frac12 \tmop{Sig} (\gamma)_i \tmop{Sig} (\gamma)_j$.
The product rule implies $\tmop{Sig} (\gamma)_{ij} + \tmop{Sig} (\gamma)_{ji} = \tmop{Sig} (\gamma)_i \tmop{Sig} (\gamma)_j$. Hence $\tmop{logSig} (\gamma)_{ij} = -\tmop{logSig} (\gamma)_{ji} $.
   \end{example}

Like the signature, the log-signature gives an encoding of a path, which can be truncated to an approximate encoding. Log-signatures are encodings that can offer more sparsity than signatures. Moreover, they have the useful property that truncation of the log-signature preserves the property of being a Lie series. 
A \tmtextit{Lie
polynomial} is a finite Lie series. We denote the space of Lie polynomials by $L (\mathbb{R}^d) \subset L ((\mathbb{R}^d))$.
We are interested in the paths whose log-signature is finite; i.e., whose log-signature is a Lie polynomial.

\begin{example}[Straight lines] 
\label{ex:finite_segment} 
A straight line in $\RR^d$ is a path $\gamma : [t_0, t_1] \to \RR^d$ with $\gamma(t) = ta$ for some $a \in \RR^d$.
Let $\gamma_a : [0, 1] \to \RR^d$ be the straight line $\gamma_a(t) = ta$ on $[0,1]$.
Its signature is $\tmop{Sig}(\gamma_a) = \exp(a) := \sum_{k=0}^\infty \frac{1}{k!} a^{\otimes k} \in T
((\mathbb{R}^d))$. 
That is, $\tmop{logSig}(\gamma_a) = a \in \RR^d \subset L (( \RR^d ))$, a Lie polynomial of degree one.
 The straight line $\gamma(t) = ta$ on domain $[t_0, t_1]$ has signature $\tmop{Sig}(\gamma_{(t_1 - t_0)a})$, hence signatures of general straight lines are signatures of paths of the form $\gamma_a$.
Signatures are unchanged under reparametrisation and translation. Hence given a vector $b \in \mathbb{R}^d$ and increasing bijection $\tau: [0,1] \to [0,1]$, the path
$t \mapsto \gamma_a ( \tau (t )) + b 
$
has the same signature as $\gamma_a$.\end{example}

The space of paths has a concatenation product, as follows. Given
a path $\gamma : [t_0, t_1] \to \RR^d$ and a fixed $u \in [t_0, t_1]$, the path $\gamma$ divides into segments $\phi := \gamma |_{[t_0, u]}$ and $\psi := \gamma|_{[u, t_1]}$. 
The signature of $\gamma$ is  the tensor product in the tensor algebra of the signatures of the two segments: $\tmop{Sig}(\gamma) = \tmop{Sig} (\phi) \otimes \tmop{Sig} (\psi)$.
The path $\gamma$ is the concatenation of its two segments; we write this as $\gamma = \phi \star \psi$. 
The concatenation product $\phi \star \psi$ extends to general paths $\phi : [t_0, t_1] \to \RR^d$ and $\psi : [s_0, s_1] \to \RR^d$, by translating $\psi$ and its time interval
 so that $s_0 = t_1$ and $\psi(s_0) = \phi(t_1)$. 
 These transformations do not alter the signature.
In particular, the concatenation of line segments $\gamma_a \star \gamma_b$ exists and has signature $\tmop{Sig}(\gamma_a \star \gamma_b) = \exp(a) \otimes \exp(b)$.

We now describe an expression for $\tmop{logSig}(\gamma_a \star \gamma_b)$ in terms of $a$ and $b$.
The \emph{Baker--Campbell--Hausdorff (BCH) formula}~\cite{dynkin1947calculation} gives an expression for $c := \BCH(a, b) \in L((\RR^d))$ such that $\exp(c) = \exp(a) \otimes \exp(b)$. 
That is, it gives an expression for the Lie series $\tmop{logSig}(\gamma_a \star \gamma_b)$.
The BCH formula begins 
\begin{equation}
    \label{eqn:bch}
    \BCH( a, b) = a + b + \frac{1}{2} [a, b] + \frac{1}{12} \left( [a, [a,b]] + [b, [b, a]] \right) + \cdots . 
\end{equation}
A formulation in our context, together with explicit expression of the full commutator series can be found, e.g., in \cite[Section 7.3]{friz2010multidimensional}.
Note that $[a, b]$ and all higher commutators terms vanish if $a$ and $b$
are collinear.

\bigskip 
We call a rectifiable path $\gamma: [t_0,t_1] \to \RR^d$ {\it reduced} if there is no interval $[s_0,s_1]\subset[t_0,t_1]$
such that
\begin{equation}
    \label{eqn:trivial_sig}
     \tmop{Sig} ( \gamma |_{[s_0,s_1]} ) = 1.
\end{equation}
Equivalently, a path is reduced if and only if the function $t \mapsto \mathbf{S}(t) = \tmop{Sig} (\gamma |_{[t_0, t]} )$ is injective, where the equivalence holds as a consequence of the multiplicative property of the signature, $\tmop{Sig} ( \gamma |_{[s,t]}) = \mathbf{S}(s)^{-1} \otimes \mathbf{S}(t)$.
 
 \begin{remark} \label{rem} \hfill
 \begin{enumerate}[label=(\roman*)]
     \item Condition~\eqref{eqn:trivial_sig} implies that $\gamma |_{[s_0,s_1]}$ is tree-like and thus contractible to the constant path within its own image,
see  \cite{boedihardjo2016signature} and references therein.
\item Any rectifiable path has a unique reduced path with the same signature,
 up to reparamatrisation and translation, see \cite{hambly2010uniqueness} and \cite[Remark 4.1]{boedihardjo2016signature}.
    This reduced path minimizes the length (that is, the $1$-variation) among all rectifiable paths with the same signature.
    \item A sufficient condition for 
 a piecewise linear path to be 
reduced is that no two consecutive pieces are collinear. 
 \end{enumerate}
   \end{remark} 

We now state
our main result.

\begin{theorem}
\label{thm:main}
  A rectifiable reduced path 
  has polynomial log-signature if and only if it is a straight line (or a reparametrisation and translation thereof).
\end{theorem}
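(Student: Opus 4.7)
The ``if'' direction is Example \ref{ex:finite_segment}: a straight line has log-signature $a\in\RR^d\subset L(\RR^d)$, and signatures (hence the log-signature being polynomial) are preserved under reparametrisation and translation.

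For the converse, let $\gamma$ be a reduced rectifiable path with $\tmop{logSig}(\gamma)=L$ a Lie polynomial of top degree $N$, and set $\ell\assign\|\gamma\|_1$. By the Hambly--Lyons signature-uniqueness theorem recalled in the abstract, a reduced rectifiable path is determined up to reparametrisation and translation by its signature; so if $L\in\RR^d$, the identity $\tmop{Sig}(\gamma)=\exp(L)=\tmop{Sig}(\gamma_L)$ (Example \ref{ex:finite_segment}) forces $\gamma$ to be a reparametrisation and translation of $\gamma_L$. It thus suffices to show $N\le 1$, and I would do this by contradiction: assume $N\ge 2$ with $L_N\ne 0$ and violate the factorial decay $\|\tmop{Sig}(\gamma)_k\|_F\le \ell^k/k!$ (immediate from \eqref{eqn:first}) at level $k=mN$ for large $m$.

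Expanding $\exp(L)_{mN}=\sum_{m'\ge m}(L^{\otimes m'})_{mN}/m'!$ and noting that any composition $mN=i_1+\cdots+i_{m'}$ with $i_j\in[1,N]$ requires $m'\ge m$, with equality only when every $i_j=N$, the $m'=m$ term contributes $L_N^{\otimes m}/m!$ while the $m'>m$ terms involve strictly fewer than $m$ copies of $L_N$. The technical heart of the proof is a lower bound $\|\exp(L)_{mN}\|_F\ge c\|L_N\|_F^m/m!$ with $c>0$ independent of $m$: the leading term is not washed out by cancellations from the $m'>m$ remainder. Combined with the factorial bound and $(mN)!/m!\ge m^{m(N-1)}$, this gives $\|L_N\|_F\cdot m^{N-1}\le \ell^N/c^{1/m}$, whose right-hand side tends to $\ell^N$ while the left tends to infinity when $L_N\ne 0$ and $N\ge 2$, contradicting the hypothesis.

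For the key lower bound, when $N=2$ the component $L_2=B$ is a skew-symmetric matrix, and I would pair $\exp(L)_{2m}$ with $L_2^{\otimes m}$ in Frobenius inner product, interpreting the pairing as a contraction against the multigraph on $\{1,\ldots,2m\}$ with two matchings: the pair matching $(1,2),(3,4),\ldots,(2m{-}1,2m)$ from $L_2^{\otimes m}$ and the word-matching from $(L^{\otimes m'})_{2m}$. Any term with $m'>m$ produces at least one odd-length alternating path with $L_1$-vectors at its endpoints, contributing a factor $a^{\top}B^{2j+1}a=0$ by antisymmetry of $B$; hence only the leading term survives and $\langle\exp(L)_{2m},L_2^{\otimes m}\rangle=\|L_2\|_F^{2m}/m!$, giving the bound by Cauchy--Schwarz. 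The main obstacle will be extending this vanishing to $N\ge 3$, where $L_N$ is a general element of $\mathrm{Lie}_N((\RR^d))$ rather than merely antisymmetric; the identity $a^{\top}B^{\mathrm{odd}}a=0$ then needs to be replaced by a Lie-element contraction-vanishing identity (for instance derived from the Dynkin or Eulerian idempotent), or one can attempt to bound the $m'>m$ remainder in norm directly using combinatorial estimates on compositions of $mN$ into parts from $[1,N]$.
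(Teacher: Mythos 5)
Your overall strategy is sound and is essentially the route of the paper's third proof (Section~\ref{sec:boedihardjo}): combine the factorial decay $\| \pi_k(\tmop{Sig}(\gamma))\| \le \|\gamma\|_{1\text{-var}}^k/k!$ of rectifiable-path signatures with a lower bound on the high-level components of $\exp(\ell)$ when $\deg \ell = N \ge 2$, conclude $N=1$, and then invoke signature uniqueness for reduced rectifiable paths \cite{hambly2010uniqueness} to identify $\gamma$ with a straight line. Your reduction to $N\le 1$, the arithmetic with $(mN)!/m!\ge m^{m(N-1)}$, and the $N=2$ pairing argument (pairing $\exp(L)_{2m}$ against $L_2^{\otimes m}$ and killing every cross term through $a^{\top}B^{\mathrm{odd}}a=0$ for skew-symmetric $B$) are all correct.

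However, there is a genuine gap exactly where you flag it: the uniform-in-$m$ lower bound $\|\pi_{mN}(\exp(L))\| \ge c\,\|L_N\|^m/m!$ for $N\ge 3$ is not proved, and it is not a routine estimate --- it is precisely the content of the quantitative lower bound of \cite[Theorem 2.2]{boedihardjo2020path}, which the paper's Section~\ref{sec:boedihardjo} proof uses as a black box (the paper's other two proofs avoid it altogether, via the Lyons extension theorem and \cite[Lemma 4.6]{boedihardjo2016signature}). Neither of your proposed fallbacks closes this. In particular, bounding the $m'>m$ remainder in norm cannot work as stated: the lower-degree components $L_1,\dots,L_{N-1}$ are not small relative to $L_N$ (they can be arbitrarily large), and at level $mN$ the number of admissible compositions grows like $2^{mN}$, so a crude bound on the remainder is of order $2^{mN}M^{m}/m!$ with $M=\max_k\|L_k\|$, which dominates $\|L_N\|^m/m!$ unless $\|L_N\|$ happens to be large; cancellation, not size, is what must be exploited. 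The exact-vanishing mechanism of your $N=2$ argument also does not persist for $N\ge 3$, where pairing against $L_N^{\otimes m}$ leaves cross terms that are genuinely nonzero in general, so a new idea (or the cited BGS estimate) is required. As written, the proof is complete only for $N\le 2$; for general $N$ it should either cite \cite[Theorem 2.2]{boedihardjo2020path} at the key step or supply the missing cancellation argument.
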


This paper is organized as follows. We prove the main result in Section~\ref{sec:proof}. We point out extensions to finite $p$-variation $p<2$ and failure for $p \ge 2$ in Section~\ref{sec:p-variation}. We relate our results to work of Boedihardjo et al.~\cite{boedihardjo2020path} in Section~\ref{sec:boedihardjo} and discuss similarities with a classical result of Marcinkiewicz in Section~\ref{sec:moments}.
We conclude with a polynomial perspective for piecewise linear paths in Section~\ref{sec:PL}.

\section{Proof of main result}
\label{sec:proof}

In this section, we prove Theorem~\ref{thm:main}.

\begin{proof}
Though the statement of Theorem~\ref{thm:main} pertains to rectifiable paths, our proof uses results from rough path theory, in particular that
the signature characterises a rough path up to reparametrisation.
We use this uniqueness result for rough paths that are not rectifiable.
We give definitions and precise references to rough path theory where appropriate in the proof.

Without loss of generality, paths are defined on $[0,1]$. 
The straight line $\gamma_a : [0, 1] \to \RR^d$ with $\gamma_a(t) = at$ has log-signature the finite Lie polynomial $a \in \RR^d$, see Example~\ref{ex:finite_segment}. 
The log-signature of a path $\gamma$ that is a reparametrisation or translation of $\gamma_a$ is also a finite Lie polynomial.
For the converse, take $\gamma : [0,1] \to \RR^d$ and assume that $\ell := \tmop{logSig} (\gamma)$ is a Lie polynomial with 
$N = \deg \ell < \infty$.

\underline{Step 1:} We introduce and study the path $\mathbf{X}: [0,1] \to T((\RR^d))$ with $\mathbf{X}(t) = \exp(t \ell)$. Its increments are $\mathbf{X}(s,t) := \mathbf{X}(s)^{-1} \otimes \mathbf{X}(t) = \exp ( ( t - s) \ell)$. 
Note the absence of BCH terms, since $s \ell$ and $t \ell$ are collinear.
The increments define a {\em multiplicative functional} in $T((\RR^d))$; that is, they satisfy
$\mathbf{X}(s,t) \otimes \mathbf{X}(t,u) = \mathbf{X}(s,u)$ for all $s,t,u \in [0,1]$ with $s < t < u$.
Projecting to the level $N$ truncated tensor algebra gives a path $\mathbf{X}^{\le N}$ in $T^{(N)}(\RR^d)$. Its increments $\mathbf{X}^{\le N} (s,t)$ define a multiplicative functional $T^{(N)}(\RR^d)$, by replacing $T((\RR^d))$ by its truncation. 

Denote the projection of $T((\RR^d))$ or $T^{(N)}(\RR^d)$ onto the $k$th graded component $(\RR^d)^{\otimes k}$ by $\pi_k$.
On the one hand, the increments $\mathbf{X}^{\le N}(s,t)$ are Lipschitz, since each coordinate entry is bounded 
by a constant times $|t-s|$. 
On the other hand, the projections $\pi_k ( \mathbf{X}^{\le N}(s,t)\mathbf )$ are bounded by a constant times $|t-s|^{\alpha k}$, by a defining property of (graded) H\"older regularity of such a multiplicative functional. This is compatible with the Lipschitz estimate if and only if $\alpha k \le 1$, for all $k \le N$, which leads to the best (largest) choice of the H\"older exponent being $\alpha = 1/N$.
In the language of rough path theory, $\mathbf{X}^{\le N}(s,t)$ is a multiplicative functional in $T^{(N)}(\RR^d)$ of finite $(1/N)$-H\"older regularity.
Such multiplicative functionals have a {\em unique} extension to a $T((\RR^d))$-valued 
multiplicative functional, 
by~\cite[Theorem 2.2.1]{lyons1998differential} applied with H\"older control $\omega (s,t)$ proportional to $t-s$.
Denote this unique extension by 
$\mathbf{Z}(s,t)$.
It satisfies
$\pi_k (\mathbf{Z}(s,t) ) = \pi_k ( \mathbf{X}^{\le N}(s,t) )$ for $k \le N$ and $| \pi_k ( \mathbf{Z}(s,t) )   | \le C_k |t-s|^{\alpha k}$ for all $k > N$ and some constants $C_k$. 
The increments $\mathbf{X}(s,t) = \exp ( (t-s ) \ell) \in  T((\RR^d))$ are therefore our required multiplicative functional, since they agree with $\mathbf{X}^{\le N}$ under projection and satisfy the 
required analytic condition for $\alpha = 1/N$. Thus $\mathbf{X}$ is the unique extension of $\mathbf{X}^{\le N}$.

\underline{Step 2:} We compute the signature of the path $\mathbf{X}^{\le N}$. The logarithms of $\mathbf{X}^{\le N}$ and $\mathbf{X}$ take values in the Lie polynomials (resp. Lie series). In the language of rough paths \cite{friz2010multidimensional}, the path $\mathbf{X}^{\le N}$ is then by definition a {\em weakly geometric} $(1/N)$-H\"older rough path. The path $\mathbf{X}^{\le N}$  is also, by definition, a weakly geometric 
$p$-rough path, with $p$-variation exponent $p=N$, 
 since $\alpha$-H\"older implies $1/\alpha$-variation regularity. By the definition of the signature of a weakly geometric $p$-rough path, we thus have
$$
              \tmop{Sig} (\mathbf{X}^{\le N}) = \mathbf{X}(0, 1) = \exp ( \ell ).
$$

\underline{Step 3:} We compute the signature of the path $\gamma$ in two ways. It is
$$
	     \tmop{Sig} (\gamma) = \mathbf{S}(0, 1) = \exp ( \ell ),
$$
since $\ell = \tmop{logSig} (\gamma)$. 
This signature is consistent with the notion of signature for weakly geometric rough paths, as we now explain.
The path $\gamma$ is rectifiable, by assumption, and hence has finite $1$-variation.
Assume $\gamma (0) =0$ without loss of generality.
Then $t \mapsto (1, \gamma (t)) \in \RR \oplus \RR^d \cong  T^{(1)}(\RR^d)$ is a weakly geometric $1$-rough path.
Let $\mathbf{S}$ denote the indefinite signature of $\gamma$.
The extension of $\gamma$ to a multiplicative functional, provided by \cite[Theorem 2.2.1]{lyons1998differential}  with $p=1$ and $\omega(s,t)$ proportional to the $1$-variation of $\gamma$ over $[s,t]$, equals $\mathbf{S} (s,t)$.
Hence
$$
	     \tmop{Sig} (\gamma) = \mathbf{S}(0, 1) = \exp ( \ell ).
$$

\underline{Step 4:} We use the uniqueness of the signature to show that $\gamma(t)$ is a straight line segment in $\RR^d$, with $\gamma(t) = t \ell$. 
For this, we use the space $G^*_{prc} \subset \exp ( L (( \RR^d )))$, which is defined in~\cite[Definition 2.1]{boedihardjo2016signature} through quantified estimates. For us, it suffices to note that $G^*_{prc}$ contains the signature of all weakly geometric $p$-rough paths and, in particular, all rectifiable paths, {see~\cite[Proposition 2.1]{boedihardjo2016signature}.}

A path $\gamma$ has {\em finite $p$-variation} if
$$
     \| \gamma \|_{p-\mathrm{var} } := \Big(   \sup_\pi \sum_{[s,t] \in \pi } | \gamma (t) - \gamma (s)|^p  \Big)^{1/p }< \infty,
$$
where the supremum is over all partitions of $[0,1]$.
In particular, rectifiable paths are continuous paths of finite
$1$-variation.
For every $S : [0, 1 ] \to G^*_{prc}$ with finite $p$-variation, there exists an injective path $\tilde S : [0, 1 ] \to G^*_{prc}$, unique up to reparametrisation,
such that $S(1) = \tilde S(1)$  and $S(0) = \tilde S(0)$,
by~\cite[Lemma 4.6]{boedihardjo2016signature}. 
We apply this result to $\mathbf{S}(t)$, the indefinite signature of $\gamma$. The map $\mathbf{S}$ is injective, since $\gamma$ is reduced. Thus $\mathbf{S}$ coincides, up to reparametrisation, with $\tilde{\mathbf{S}}$. 
The path $\mathbf{X}$ with $\mathbf{X}(t) = \exp ( t \ell)$ also sends $[0,1]$ injectively into $G^*_{prc}$ and satisfies $S(0) = 1$ and $S(1) =\exp ( \ell )$. By uniqueness, it must be a reparametrisation of $\mathbf{S}$. 
Reparametrization leaves $p$-variation regularity unchanged. 
The path $\mathbf{S}$ is the indefinite signature of the weakly geometric $1$-rough path $(1, \gamma)$.
The best $p$-variation exponent one has for $\mathbf{X}(t) = \exp ( t \ell)$ is $p = \deg \ell$. 
Hence $\deg \ell = 1$, or $\ell \in \RR^d$.

A $1$-rough path is determined by its projection onto the level-$1$ part of the signature; that is, its projection to $\RR^d$.
The projection of $\mathbf{X}$ to $\RR^d$ is the straight line $t \ell$. Hence $\gamma(t) = t\ell$ is a straight line in $\RR^d$. 
\end{proof} 

\begin{remark}
The proof of Theorem~\ref{thm:main} breaks if $\ell:= \tmop{logSig}(\gamma)$ is infinite: in this case the path $\mathbf{X}(t) = \exp ( t \ell) \in T (( \RR^d))$ fails to be the (indefinite) signature of any $p$-rough path since, 
as discussed in the proof, the best such $p$ is the degree of $\ell$, which here is infinite.
Hence~\cite[Lemma 4.6]{boedihardjo2016signature} does not apply to $\mathbf{X}$, and we cannot deduce that $\mathbf{S}$ and $\mathbf{X}$ agree up to reparametrisation. 
\end{remark}

Our main theorem says that any
rectifiable reduced path that is not a straight-line (or reparametrisation and translation thereof) has infinitely many non-zero terms in its log-signature. 
Consequently, the log-signature of a rectifiable path either has degree one or infinite support.
We give a second proof via  {\em smooth rough paths} \cite{bellingeri2022smooth}, which bypasses some analytic details of the first proof. Smooth rough paths are in precise analogy with  {\em smooth models} in Hairer's regularity structures \cite[Definition 6.7]{bruned2019algebraic}.

\begin{proof}[Alternative proof of Theorem~\ref{thm:main}] 
We assume for simplicity that $\gamma$ is smooth, but the argumment is identical for continuously differentiable $\gamma$ and, more generally, absolutely continuous $\gamma$, writing $t$-almost surely 
when doing calculus. Starting with a general rectifiable path $\gamma$, reparametrisation by running length does not alter the signature and yields an absolutely continuous paths, so there is no loss of generality.

Let $N$ be the degree of Lie polynomial $\ell$.
Let $\mathbf{X} (t)$ denote the projection of $\exp(t \ell)$ to $T^{(N)} ( \RR^d )$.
It is a level-$N$ smooth geometric rough path in the sense of \cite{bellingeri2022smooth}. 
By the fundamental theorem of smooth geometric rough paths~\cite[Theorem 2.8]{bellingeri2022smooth}, it has a unique extension $\mathbf{Z}(t) \in T (( \RR^d ))$ that satisfies the minimality condition that $\mathbf{Z}^{-1} (t) \otimes \dot{\mathbf{Z}} (t)$ lies in the truncated space $T^{(N)} ( \RR^d )$ rather than $T (( \RR^d ))$. The extension therefore satisfies
$\mathbf{Z}^{-1} (t) \otimes \dot{\mathbf{Z}} (t)  = \mathbf{X}^{-1} (t) \otimes \dot{\mathbf{X}} (t) \in T^{(N)} ( \RR^d )$.
In our case the right hand side is $\ell \in T^{(N)} ( \RR^d )$ and solving the ODE in $T (( \RR^d ))$ with $\mathbf{Z}(0)=\mathbf{X}(0)=1$, gives $\mathbf{Z}(t) = \exp(t \ell)) \in T^{(N)} ( \RR^d )$. Consistency of $\mathbf{Z}$ with the Lyons' extension is verified in~\cite[Proposition 2.14]{bellingeri2022smooth}. 
This is an essentially computation-free alternative to Step 1 from our previous proof.

For Step 2, we view $\gamma$  as a level-$1$ smooth geometric rough path, whose unique minimal extension is $\mathbf{S}(t)$, the indefinite signature of $\gamma$. Step 3 is as before. 
Step 4 is also as before, and again relies on \cite[Lemma 4.6]{boedihardjo2016signature}. (Specialising~\cite[Lemma 4.6]{boedihardjo2016signature} to the setting of smooth geometric rough paths does not seem to lead to major simplifications.) As in our previous proof, it follows that $\mathbf{X}$ and $\mathbf{S}$ are reparametrizations of each other and, since Riemann-Stieltjes integration is not affected 
by such reparametrizations, we have, using $d \gamma = \mathbf{S}^{-1} \otimes d \mathbf{S}$,
\[ \ell = \int_0^1 \mathbf{X}(t)^{- 1} d \mathbf{X}(t) = \int_0^1
   \mathbf{S}(t)^{- 1} d \mathbf{S}(t) = \int_0^1 d \gamma = \gamma (1) -
   \gamma (0) \in \mathbb{R}^d . \]
In particular, $\ell \in \mathbb{R}^d$, and we conclude as before.
\end{proof}

\section{Beyond rectifiability}
\label{sec:p-variation}

Theorem~\ref{thm:main} generalises from rectifiable paths to continuous paths of finite $p$-variation, provided $p<2$. The signature transform remains well-defined by iterated Young integration, see e.g. \cite{lyons2002system, friz2010multidimensional}.
We have the following generalization of Theorem~\ref{thm:main} with the same proof. 

\begin{theorem}
\label{thm:main_rough}  
Let $p < 2$. A continuous reduced path of finite $p$-variation has polynomial log-signature if and only if it is a straight line (or a reparametrisation and translation thereof).
\end{theorem}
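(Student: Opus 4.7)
The plan is to reuse the four-step proof of Theorem~\ref{thm:main} almost verbatim, since each step was already phrased in $p$-variation language rather than specifically in the $1$-variation (rectifiable) language. The forward implication is unchanged: a straight line, and hence any reparametrisation or translation of one, has log-signature $\ell \in \RR^d$, a Lie polynomial of degree one. For the converse, assume $\gamma:[0,1]\to\RR^d$ is continuous, reduced, of finite $p$-variation with $p<2$, and that $\ell := \tmop{logSig}(\gamma)$ is a Lie polynomial of degree $N$.

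Steps~1 and~2, which construct $\mathbf{X}(t) = \exp(t\ell) \in T((\RR^d))$ as the unique $(1/N)$-H\"older multiplicative extension of $\mathbf{X}^{\le N}$ and identify its signature as $\exp(\ell)$, depend only on the Lie polynomial $\ell$ and transfer without change. Step~3 is the only place where the regularity of $\gamma$ enters: for $p<2$ the signature of $\gamma$ is well-defined via iterated Young integration~\cite{lyons2002system, friz2010multidimensional}, and $(1,\gamma)$ lifts uniquely, via~\cite[Theorem 2.2.1]{lyons1998differential} applied with $\omega$ comparable to the $p$-variation of $\gamma$, to a weakly geometric $p$-rough path whose indefinite signature $\mathbf{S}(t)$ satisfies $\mathbf{S}(0,1) = \exp(\ell)$.

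Step~4 invokes~\cite[Lemma 4.6]{boedihardjo2016signature}, which is formulated for paths of finite $p$-variation taking values in $G^*_{prc}$; both $\mathbf{S}$ and $\mathbf{X}$ belong to this class. Since $\gamma$ is reduced, $\mathbf{S}$ is injective; comparing with the injective $\mathbf{X}$ and using that $\mathbf{S}(0)=\mathbf{X}(0)=1$ and $\mathbf{S}(1)=\mathbf{X}(1)=\exp(\ell)$, we deduce that $\mathbf{S}$ and $\mathbf{X}$ agree up to reparametrisation. Reparametrisation preserves $p$-variation, so $\mathbf{X}(t)=\exp(t\ell)$ inherits finite $p$-variation for some $p<2$. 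But the sharp $p$-variation exponent of $\exp(t\ell)$ equals $\deg\ell = N$, forcing $N<2$ and therefore $N=1$. Hence $\ell \in \RR^d$, and projecting to level one identifies $\gamma$ with the straight line $t\mapsto t\ell$ up to reparametrisation and translation.

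The only point that requires genuine checking is that every ingredient of the rectifiable proof admits a clean $p$-variation counterpart for $1\le p<2$. Lyons' extension theorem is already stated in $p$-variation; the ambient set $G^*_{prc}$ of~\cite{boedihardjo2016signature} by design contains the signatures of all weakly geometric $p$-rough paths; and Young integration supplies the signature throughout this range. The main conceptual point, and the step most easily overlooked, is the identification of the sharp $p$-variation exponent of $\exp(t\ell)$ with $\deg\ell$, which is precisely what allows the hypothesis $p<2$ to collapse the degree of $\ell$ to one. I expect this threshold $p<2$ to be sharp for essentially the same reason: once $p\ge 2$, the constraint $\deg\ell \le p$ no longer forces $\deg\ell = 1$, and the argument can no longer conclude, foreshadowing the failure discussed in the following section.
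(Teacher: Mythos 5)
Your proposal is correct and follows essentially the same route as the paper, which proves Theorem~\ref{thm:main_rough} simply by noting that the four-step proof of Theorem~\ref{thm:main} carries over verbatim once the signature is defined via iterated Young integration for $p<2$. You correctly isolate the only two places where the hypothesis enters: Step~3 (Young/Lyons lift of $(1,\gamma)$ as a weakly geometric $p$-rough path) and Step~4, where the sharp $p$-variation exponent $\deg\ell$ of $\exp(t\ell)$ combined with $p<2$ forces $\deg\ell=1$.
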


Theorem~\ref{thm:main_rough} is false if $p
\geqslant 2$. 
Indeed, it suffices to consider $t \mapsto \exp (t \ell) \in T^{(2)} (\mathbb{R}^d)$ 
for $\ell$ a non-zero homogenous Lie
polynomial of degree $m = 2$. Such rough paths are  called {\em pure-$m$ rough paths}, or pure area rough paths when $m=2$.

\section{Proof via BGS estimates}
\label{sec:boedihardjo}

Another route to Theorem~\ref{thm:main} was suggested to us by H. Boedihardjo,
in terms of a careful application of the analytic estimates of~\cite[Theorem 2.2]{boedihardjo2020path}. 
This replaces the use of~\cite[Lemma 4.6]{boedihardjo2016signature} that was central to our two earlier proofs.
All three proofs use results from rough path theory. It is an open problem to prove Theorem~\ref{thm:main} via an approach that studies only rectifiable paths.

\begin{proof}[Yet another proof of Theorem~\ref{thm:main}] Given a tensor series ${\bf T} \in T ((\RR^d))$, we define
\begin{equation} \label{eq:Lp}
L_{p}({\bf T}):= \limsup_{k\rightarrow\infty}\left(\left(\frac{k}{p}\right)!\left\Vert \pi_k ( \bf T) \right\Vert \right)^{p/k},
\end{equation}
where $\| \cdot \|$ denotes a norm of order $k$ tensors, for any $k$, that is compatible in the sense that $\| a \otimes b \| \le \| a \| \, \| b \|$. Applied to the signature of $\gamma$, we see that 
\begin{equation} \label{eq:LpSig}
L_1(\tmop{Sig}(\gamma)) \le \| \gamma \|_{1\text{-var}} \quad \text{and} \quad L_p(\tmop{Sig}(\gamma)) = 0 \text{ for } p > 1,\end{equation}
as follows. This is a consequence of 
\[ \| \pi_k (\tmop{Sig} (\gamma)) \| = \frac{1}{k!} \left\| \int_{[0, 1]^k} \! \! \! \!
   \dot{\gamma} \otimes \cdots \otimes \dot{\gamma} d (t_1, \ldots, t_k) \right\|
   \leqslant \frac{1}{k!} \int_{[0, 1]^k}  \! \! \! \| \dot{\gamma} \otimes \cdots \otimes
   \dot{\gamma} \| d (t_1, \ldots, t_k) \]
\[ \leqslant \frac{1}{k!} \int_{[0, 1]^k}  \| \dot{\gamma} \| \ldots \|  \dot{\gamma}
   \| d (t_1, \ldots, t_k) = \frac{1}{k!} \left( \int_0^1 \| \dot{\gamma} \|
   \tmop{dt} \right)^k = \frac{1}{k!} \| X \|_{1 - \tmop{var}}^k. \]
Here $d \gamma = \dot{\gamma} dt$, which entails no loss of generality. Reparametrisation by running length does not alter the signature and yields an absolutely continuous path. Alternativley, use facts on Stieltjes integrals that directly justify the above estimation.

The result~\cite[Theorem 2.2]{boedihardjo2020path} states that (under the additional assumption that the tensor norms are projective, which entails no loss of generality in our finite-dimensional setup, with base space $\RR^d$)
for each
$m\geqslant1$, there exists a constant $c(m,d)\in(0,1]$ 
such that 
\[
c(m,d)\|\pi_{m}(\ell )\|\leqslant L_{m}( \tmop{Sig} ({\bf X}) )\leqslant  \|\pi_{m}(\ell)\|
\]
for every pure-$m$ rough path ${\bf X}(t)=\exp(t \ell) \in T^{(m)} (\RR^d)$, with $\deg \ell = m$.
But by Step 1 and 2 in our previous proofs, or by \cite[Proposition 2.2]{boedihardjo2020path},  we know $ \tmop{Sig} ({\bf X})$
is $\exp ( \ell) \in T (( \RR^d ))$. This implies the estimate
\begin{equation} \label{eq:cmd}
c(m,d)\|\pi_{m}(\ell )\|\leqslant L_{m}( \exp ( \ell )  )\leqslant  \|\pi_{m}(\ell)\| .
\end{equation}
If $\tmop{Sig}(\gamma) = \exp ( \ell)$,  for some Lie polynomial $\ell$ of degree $m$, then 
\eqref{eq:LpSig} and \eqref{eq:cmd} implies $m=1$. It follows that $\ell \in \RR^d$ and we can conclude as before.
\end{proof}

\section{Connections to Marcinkiewicz's theorem}
\label{sec:moments}

Theorem~\ref{thm:main} has the following implication.

\begin{corollary}
\label{cor:poly_Pm}
  Let $P$ be a Lie polynomial of degree $m \geqslant 2$. 
  Then $P$ cannot be the log-signature transform of a rectifiable path.
\end{corollary}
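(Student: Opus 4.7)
The plan is to deduce the corollary directly from Theorem~\ref{thm:main} via the reduction statement in Remark~\ref{rem}(ii). Suppose, for contradiction, that there exists a rectifiable path $\gamma : [t_0,t_1] \to \RR^d$ with $\tmop{logSig}(\gamma) = P$ and $\deg P = m \ge 2$. The first step is to replace $\gamma$ by a reduced representative without changing its signature (and therefore without changing its log-signature).

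By Remark~\ref{rem}(ii), there is a unique rectifiable reduced path $\tilde \gamma$ (up to reparametrisation and translation) with $\tmop{Sig}(\tilde\gamma) = \tmop{Sig}(\gamma)$. Taking logarithms, $\tmop{logSig}(\tilde\gamma) = P$, which is by hypothesis a Lie polynomial. Theorem~\ref{thm:main} then forces $\tilde\gamma$ to be a straight line, up to reparametrisation and translation. However, as computed in Example~\ref{ex:finite_segment}, the log-signature of any such straight line lies in $\RR^d \subset L((\RR^d))$ and so has degree one, contradicting $\deg P = m \ge 2$.

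The argument is routine once Theorem~\ref{thm:main} and the reduction result are in hand; I do not anticipate a real obstacle. The only subtle point is ensuring that the reduced representative exists as a rectifiable path (not merely as some more general object), so that Theorem~\ref{thm:main} applies verbatim. This is exactly the content cited in Remark~\ref{rem}(ii), via \cite{hambly2010uniqueness} and \cite[Remark 4.1]{boedihardjo2016signature}, where the reduced path is additionally length-minimising among rectifiable paths with the given signature. With this invoked, the three-line contradiction above completes the proof.
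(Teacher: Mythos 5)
Your proposal is correct and is exactly the argument the paper intends: Corollary~\ref{cor:poly_Pm} is stated as a direct implication of Theorem~\ref{thm:main}, with the passage to a rectifiable reduced representative via Remark~\ref{rem}(ii) (preserving the signature, hence the log-signature) being the only step to supply, which you do. Your attention to the fact that the reduced path is itself rectifiable (the length-minimiser) is precisely the right point to check, so nothing is missing.
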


We compare this to a classical result of  Marcinkiewicz~\cite{marcinkiewicz1939propriete}, see also \cite[Theorem B]{lukacs1958some}, 
which we restate in a form that exhibits their similarity. Let $X=X(\omega)$ be a random variable with values in $\mathbb{R}^d$, with finite moments of all orders.
Its {\em moment transform} is
\[ 
\mu (X) 
\assign \mathbb{E} (\exp (X)) = 1 + \sum_{k \geqslant 1} \frac{1}{k!} \mathbb{E} (X^{\otimes k}).
   \]
The moment transform lies
the symmetric algebra over $\mathbb{R}^d$, 
denoted $\tmop{Sym}
((\mathbb{R}^d))$,
which consists of series of symmetric tensors. This space is isomorphic to the space of power series in $d$ commuting indeterminates.
 The {\em log-moment} (or {\em cumulant}) transform is
 $$
    \kappa (X) := \log  \mu(X) \in \tmop{Sym} ((\mathbb{R}^d)).
 $$
 The entries of 
 $\mu(X)$ and $\kappa(X)$ 
are the multivariate moments and cumulants of $X$, 
up to factorial constants.

\begin{example}
    If $X \sim N(b,a)$ is normally distributed with mean $b \in
\mathbb{R}^d$ and covariance $a \in \mathbb{R}^d \otimes \mathbb{R}^d$, the log-moment transform is $\kappa(X) = b + a/2$, a degree two polynomial in $\tmop{Sym}((\mathbb{R}^d))$.
\end{example}

\begin{theorem}[\cite{marcinkiewicz1939propriete}]
\label{thm:marc}
  Let $P$ be a polynomial of degree $m \geqslant 3$. Then 
  $P$ cannot be the log-moment transform of a probability measure (with all moments finite). 
\end{theorem}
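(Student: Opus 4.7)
The plan is to reduce the multivariate statement to the one-dimensional case and prove the latter directly via the ridge inequality for entire characteristic functions. Identifying $\tmop{Sym}((\mathbb{R}^d))$ with the formal power series ring $\mathbb{R}[[y_1,\ldots,y_d]]$ via $e_i\leftrightarrow y_i$, one has $\kappa(X)(y)=\log\mathbb{E}[e^{X\cdot y}]$, the cumulant generating function of $X$. Assume for contradiction that $\kappa(X)=P$ is a polynomial of degree exactly $m\geq 3$ with top-degree homogeneous part $P_m\neq 0$, and pick $\xi\in\mathbb{R}^d$ with $P_m(\xi)\neq 0$. The scalar random variable $Y:=\xi\cdot X$ then has $\kappa_Y(t)=P(t\xi)$, a real polynomial in $t$ of degree exactly $m$. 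It therefore suffices to show that no real-valued random variable has polynomial cumulant generating function of degree $\geq 3$.

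For this univariate claim, set $Q:=\kappa_Y\in\mathbb{R}[t]$. Polynomiality of $Q$ makes $\varphi_Y(z)=e^{Q(iz)}$ an entire characteristic function, and non-negativity of the law of $Y$ yields the ridge inequality $|\varphi_Y(t+is)|\leq \varphi_Y(is)$ for all $t,s\in\mathbb{R}$, equivalently (using that $Q$ has real coefficients)
\[
g(t,s)\;:=\;Q(-s)-\mathrm{Re}\,Q(it-s)\;=\;\sum_{j\geq 1}\frac{(-1)^{j+1}}{(2j)!}\,t^{2j}\,Q^{(2j)}(-s)\;\geq\;0,\qquad t,s\in\mathbb{R}.
\]
This $g$ is a real polynomial in $(t,s)$ of total degree $\leq m$, and its non-negativity on $\mathbb{R}^2$ forces the homogeneous degree-$m$ part of $g$ to be non-negative on $\mathbb{R}^2$ (otherwise a rescaling $(t,s)\mapsto (\lambda t,\lambda s)$ with $\lambda\to\infty$ would exhibit a violation).

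A direct computation using the leading coefficient of $Q^{(2j)}$ shows that the degree-$m$ part of $g$, evaluated at $t=\sigma s$, equals (up to an overall sign depending on the parity of $m$) $a_m\,s^m\,\bigl[1-\mathrm{Re}((1+i\sigma)^m)\bigr]$, where $a_m$ is the leading coefficient of $Q$. For $m$ odd, $s^m$ changes sign with $s$, so non-negativity for both signs of $s$ forces $a_m\,[1-\mathrm{Re}((1+i\sigma)^m)]\equiv 0$ and hence $a_m=0$ (since $1-\mathrm{Re}((1+i\sigma)^m)\sim \binom{m}{2}\sigma^2\neq 0$ near $\sigma=0$). For $m$ even, $s^m\geq 0$, and non-negativity of $\sigma\mapsto a_m\,[1-\mathrm{Re}((1+i\sigma)^m)]$ forces $a_m=0$ whenever the bracket takes both signs; this is the case for all $m\geq 4$, since the bracket is positive near $\sigma=0$ but takes negative values at $\sigma=\tan(2\pi/m)$ for $m\geq 5$ (where $\cos(m\arctan\sigma)=1$ and $(1+\sigma^2)^{m/2}=\sec^m(2\pi/m)>1$), with $m=4$ handled by direct calculation, as $1-\mathrm{Re}((1+i\sigma)^4)=6\sigma^2-\sigma^4$ is negative for $\sigma^2>6$. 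Either way, $a_m=0$ contradicts $\deg P=m$.

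The main obstacle is the sign-change analysis of $1-\mathrm{Re}((1+i\sigma)^m)$ in the even case, which is elementary but requires separate treatment of $m=4$ (where the natural candidate $\sigma=\tan(2\pi/m)$ is infinite); the analytic input (entireness and the ridge inequality for entire characteristic functions) is completely standard.
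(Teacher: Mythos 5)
The paper does not prove Theorem~\ref{thm:marc}; it is quoted from Marcinkiewicz~\cite{marcinkiewicz1939propriete} and \cite{lukacs1958some}. Your argument is essentially the classical one: reduce to $d=1$ by evaluating along a direction $\xi$ with $P_m(\xi)\neq 0$, use the ridge property $|\varphi_Y(t+is)|\le\varphi_Y(is)$ of an entire characteristic function, and kill the top coefficient by a leading-order analysis along rays. The computation of the degree-$m$ part of $g$ and the case discussion ($m$ odd; $m$ even with $\sigma=\tan(2\pi/m)$ for $m\ge 6$ and the direct check $1-\mathrm{Re}((1+i\sigma)^4)=6\sigma^2-\sigma^4$ for $m=4$) are correct.

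There is, however, one genuine gap at the start. The hypothesis of the theorem, as stated in the paper, is purely formal: $X$ has all moments finite and the formal series $\kappa(X)\in\tmop{Sym}((\mathbb{R}^d))$ is the polynomial $P$. You immediately identify $\kappa(X)$ with $\log\mathbb{E}[e^{X\cdot y}]$ and assert that ``polynomiality of $Q$ makes $\varphi_Y(z)=e^{Q(iz)}$ an entire characteristic function.'' Neither is automatic: a priori $\mathbb{E}[e^{X\cdot y}]$ may be infinite for every $y\neq 0$, and finiteness of all moments only makes $\varphi_Y$ a $C^\infty$ function on $\mathbb{R}$, which is not determined by its Taylor coefficients at $0$ --- so without an extra argument you cannot even conclude $\varphi_Y=e^{Q(i\cdot)}$ locally, let alone obtain the ridge inequality for all real $s$, which your ray argument (scaling $(t,s)\to(\lambda t,\lambda s)$) genuinely needs. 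The gap is fixable by a standard estimate: the formal identity $\sum_n \mathbb{E}[Y^n]\,t^n/n!=e^{Q(t)}$ identifies the moments of $Y$ with $n!$ times the Taylor coefficients of the entire function $e^{Q}$, which has order $m$, so $\mathbb{E}[Y^{2k}]\le C^{2k}\,((2k)!)^{1-1/m}$ up to geometric factors; by comparison with even moments this gives $\mathbb{E}[e^{t|Y|}]<\infty$ for all $t$, hence the moment generating function is entire and equals $e^{Q}$, and $\varphi_Y(z)=e^{Q(iz)}$ is an entire characteristic function. With that lemma supplied, your proof is complete and is a faithful reconstruction of the Marcinkiewicz--Lukacs argument.
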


We discuss
steps towards a mutual generalization of Corollary~\ref{cor:poly_Pm} and Theorem~\ref{thm:marc}. 
Let $p \in [1, \infty)$. Given a random weakly geometric $p$-rough path $\mathbf{X} = \mathbf{X} (\omega)$ in $\RR^d$, its signature and log-signature are $T((\RR^d))$-valued random variables. Assuming (componentwise) integrability, we 
define
the {\em expected signature} and {\em signature cumulant} by
$$
       \boldsymbol{\mu} (\mathbf{X}) := \mathbb{E} ( \tmop{Sig} (\mathbf{X})), \quad \boldsymbol{\kappa} 
       (\mathbf{X}) := \log 
       \mathbb{E} ( \tmop{Sig} (\mathbf{X}))
       \in   T((\RR^d)).
 $$

The signature cumulant of the Brownian rough path is finite, as follows.

 \begin{example}
 \label{ex:brownian}
  Let $X\sim \mathrm{Bm}(a,b)$, meaning that $X=(X(t):0 \le t \le 1)$, is a 
Brownian motion with drift $b$ and covariance $a$. That is, $Y(t) := X(t) - bt$ defines a centered Gaussian process with covariance $\mathbb{E} ( Y(s) \otimes Y(t)) = a \min \{ s, t \}$; in particular $X (t)  \sim N( b t, a t)$.
Let $\mathbf{X} \sim \mathrm{Brp}(b,a)$ by which we mean that $\mathbf{X}$ is the {\em Brownian rough path} obtained from $X$ by iterated Stratonovich integration. Then  $ \boldsymbol{\mu} (\mathbf{X}) = \exp (  b + a/2)$. 
 See e.g. \cite[Chapter 3]{friz2020course},~\cite[Section 7.1]{amendola2019varieties} for an algebraic geometry perspective, and \cite{friz2022unified} for a far-reaching extension to a general semimartingale.
Equivalently,
$$
      \boldsymbol{\kappa} (\mathbf{X}) := b + a/2 , 
$$
which is a degree two polynomial in $T((\mathbb{R}^d))$.    
 \end{example}

The naive guess is that the signature cumulant of any Gaussian rough path (e.g. in the sense of \cite[Chapter 15]{friz2010multidimensional}) is finite. This is wrong, by the following example.

\begin{example}
    Take independent $\mathbf{X}^i \sim \mathrm{Brp}(0,a_i)$, for $i=1,2$. 
Write $\mathbf{X}^1 \star \mathbf{X}^2$ for the concatenation of these random rough paths. The expected signature is $\exp (a_1 /2 ) \otimes \exp (a_2 /2 )$. By the BCH formula, $$\boldsymbol{\kappa} ( \mathbf{X}^1 \star \mathbf{X}^2 ) = \mathrm{BCH} (a_1/2, a_2/2) = a_1/2 + a_2/2 + [a_1,a_2]/8 + ...\in T(( \RR^d)). $$
Unless $a_1$ and $a_2$ are collinear, this is non-polynomial, cf. Corollary~\ref{cor:2} below. 
\end{example}

The above example does not have stationarity of increments, in contrast to Example~\ref{ex:brownian}. See e.g. \cite[ Chapter 13]{friz2010multidimensional} or \cite{friz2017general}.

\begin{definition}[Brownian-like signatures] 
Given a random weakly geometric $p$-rough path $\mathbf{X} = \mathbf{X} (\omega)$ we call its signature $\tmop{Sig} (\mathbf{X})$ {\em Brownian like} if its signature cumulants $ \boldsymbol{\kappa} (\mathbf{X})$ are well-defined and polynomial, that is, finite in $T((\mathbb{R}^d))$.
\end{definition} 

For example, \cite{friz2015physical} computed the expected signature of the so-called magnetic Brownian (rough) path to be of the form $\exp ( \tilde b + a/2)$ where $\tilde b = b^1 + b^2 \in L^2 (\RR^d)$ and $a  \in (\RR^d)^{\otimes 2}$ is a symmetric matrix.
Note that $\tilde b + a/2$ is a  polynomial of degree two in  $T ((\RR^d ))$.
We also note the example of the deterministic weakly geometric $N$-rough path, given by $t \mapsto \exp (t  \ell) \in T^{(N)} (\RR^d )$, for $\ell \in L(\RR^d)$ of degree $N$, which has 
$\boldsymbol{\kappa} = \tmop{logSig}$ equal to the Lie polynomial $\ell$. We suspect that Brownian like signature are related to signatures of ``higher order'' Brownian rough paths. This is left to future work.

\section{Piecewise linear paths}
\label{sec:PL}

We specialise to piecewise linear paths, studying connections to systems of polynomial equations. The piecewise linear path with $m$ pieces $v_1, \ldots, v_m \in \RR^d$ is the concatenation $\gamma := \gamma_{v_1} \star \gamma_{v_2} \star  \cdots \star \gamma_{v_m}$.

\begin{corollary}
    \label{cor:2a}
    Let $\gamma$ be a piecewise linear path with $m$ pieces and no two consecutive pieces collinear. The log-signature of $\gamma$ is finite if and only if $m = 1$.
\end{corollary}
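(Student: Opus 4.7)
The plan is to reduce Corollary~\ref{cor:2a} directly to Theorem~\ref{thm:main}. The only technical points are verifying the hypotheses of that theorem for $\gamma$ and then extracting the contradiction when $m \geq 2$.

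First I would dispatch the easy direction: if $m=1$, then $\gamma = \gamma_{v_1}$ is a single straight-line segment, and by Example~\ref{ex:finite_segment} its log-signature equals $v_1 \in \RR^d$, a Lie polynomial of degree one.

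For the converse, suppose $\gamma = \gamma_{v_1} \star \cdots \star \gamma_{v_m}$ has no two consecutive pieces collinear and has polynomial log-signature. By Remark~\ref{rem}(iii), $\gamma$ is reduced, and $\gamma$ is rectifiable as a finite concatenation of line segments. Therefore the hypotheses of Theorem~\ref{thm:main} are met, and $\gamma$ must be a straight line up to reparametrisation and translation. In particular, the image of $\gamma$ lies in an affine line $L \subset \RR^d$, so every tangent vector $v_i$ is parallel to the direction of $L$. This forces all $v_i$ to be mutually collinear; in particular, any two consecutive pieces $v_i, v_{i+1}$ are collinear. By hypothesis, this can happen only if there are no two consecutive pieces at all, i.e.\ $m=1$.

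The main (and only) obstacle is the appeal to Theorem~\ref{thm:main}, which has already been proved, so the corollary reduces to the short bookkeeping above. The one subtlety worth flagging is that being ``a straight line up to reparametrisation and translation'' is a statement about the path as a map, not merely about its endpoints; it is precisely this global constraint on the image that forces all the direction vectors $v_i$ to be parallel, yielding the contradiction with the non-collinearity hypothesis when $m\geq 2$.
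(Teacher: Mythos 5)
Your proposal is correct and follows essentially the same route as the paper: invoke Remark~\ref{rem}(iii) to see that non-collinearity of consecutive pieces makes $\gamma$ reduced, apply Theorem~\ref{thm:main}, and then note that the straight-line conclusion forces consecutive pieces to be collinear unless $m=1$. Your explicit handling of the easy direction and of why the straight-line conclusion constrains all the $v_i$ is just a more detailed write-up of the same argument.
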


\begin{proof}
A sufficient for a piecewise linear path to be reduced is that no two consecutive pieces are collinear.
Hence $\gamma$ is a straight line, by Theorem~\ref{thm:main}. Since consecutive pieces are not collinear, the straight line has $m=1$ piece.
\end{proof}

We rephrase Corollary~\ref{cor:2a} as a statement about the Baker Campbell Hausdorff formula. Given $v_1, \ldots, v_m \in \RR^d$, we define the iterated BCH formula to be the Lie series $c := \BCH(v_1, \ldots, v_m) \in L ((\mathbb{R}^d))$ such that $\exp(c) = \exp(v_1) \otimes \cdots \otimes \exp(v_m)$. When $m=1$, we set $\BCH(v_1) = v_1$. For $m=2$, see~\eqref{eqn:bch}. We suspect that the following may be known to experts in the Lie algebra community. 

\begin{corollary}
\label{cor:2} 
Fix $v_1, \ldots, v_m \in \RR^d$ with no consecutive $v_i$ collinear. The iterated Baker Campbell Haussdorff formula $\BCH(v_1, \ldots, v_m) \in L((\RR^d))$ has finitely many non-zero terms if and only if $m = 1$. 
\end{corollary}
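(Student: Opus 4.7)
The plan is to reduce Corollary~\ref{cor:2} to Corollary~\ref{cor:2a} by recognizing that the iterated Baker--Campbell--Hausdorff series is, by construction, nothing other than the log-signature of a piecewise linear path. First, I would fix the easy direction: when $m=1$, the formula $\BCH(v_1) = v_1 \in \RR^d \subset L((\RR^d))$ is a Lie polynomial of degree one, so it has only one non-zero term.

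For the nontrivial direction, consider the piecewise linear path $\gamma := \gamma_{v_1} \star \gamma_{v_2} \star \cdots \star \gamma_{v_m}$. Its signature factorizes as a tensor product, giving $\tmop{Sig}(\gamma) = \exp(v_1) \otimes \cdots \otimes \exp(v_m)$. By the very definition of the iterated BCH series, this equals $\exp(\BCH(v_1,\ldots,v_m))$, so $\tmop{logSig}(\gamma) = \BCH(v_1, \ldots, v_m)$. The hypothesis that no two consecutive $v_i$ are collinear is precisely the sufficient condition, recorded in Remark~\ref{rem}(iii), for $\gamma$ to be reduced. Corollary~\ref{cor:2a} then implies that if $\BCH(v_1,\ldots,v_m) \in L((\RR^d))$ has only finitely many non-zero terms, then $m=1$.

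There is essentially no obstacle in this argument: the work has already been done in establishing Theorem~\ref{thm:main} and its specialization Corollary~\ref{cor:2a}. The only point that requires care is the translation between the Lie-algebraic formulation (iterated BCH of vectors in $\RR^d$) and the path-space formulation (log-signature of a concatenation of segments), and the verification that the non-collinearity assumption plays the role of the reducedness hypothesis needed to invoke Corollary~\ref{cor:2a}. Both are immediate from the multiplicative property of the signature and Remark~\ref{rem}(iii).
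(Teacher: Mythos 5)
Your proposal is correct and is essentially the paper's own proof: identify $\BCH(v_1,\ldots,v_m)$ with $\tmop{logSig}(\gamma_{v_1}\star\cdots\star\gamma_{v_m})$ via the multiplicative property of the signature and apply Corollary~\ref{cor:2a}, with the non-collinearity hypothesis supplying reducedness as in Remark~\ref{rem}(iii).
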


\begin{proof}
The log-signature of the piecewise linear path with pieces $v_1, \ldots, v_m$ is 
$\BCH(v_1, \ldots, v_m)$, which is finite if and only if $m=1$, by Corollary~\ref{cor:2a}.
\end{proof}

The task of recovering a path 
from its truncated signature was studied via solving systems of polynomial equations in~\cite{amendola2019varieties, pfeffer2019learning}. 
We apply this perspective to Corollaries~\ref{cor:2a} and~\ref{cor:2}. 
Consider the piecewise linear path with pieces $v_1, \ldots, v_m \in \RR^d$.
Its log-signature at level $k$ is a tensor in $(\RR^d)^{\otimes k}$ that is a rational linear combination of terms $v_{i_1} \otimes \cdots \otimes v_{i_k}$ for some $i_1, \ldots, i_k \in \{1, \ldots, m\}$.

\begin{example}
The log-signature at level two is \begin{equation}
    \label{eqn:level2}
\frac12 \sum_{1 \leq i < j \leq m} ( v_i \otimes v_j - v_j \otimes v_i ).
\end{equation}
The log-signature at level three is
\begin{equation}
    \label{eqn:level3}
 \frac{1}{12} \sum_{i \neq j } (v_i^{\otimes 2} \otimes v_j + v_i \otimes v_j^{\otimes 2} ) + \frac13 \! \! \sum_{(i,j,k) \in S_1} \! \! \! \! \! \! v_i \otimes v_j \otimes v_k 
 - \frac16 \! \! \sum_{(i,j,k) \in S_2} \! \! \! \! \! \! v_i \otimes v_j \otimes v_k ,
\end{equation}
 where $S_1 = \{ (i,j,k) : i \! < \! j \! < \! k \text{ or } i \! >\! j \! > k \}$, $S_2 = \{ (i,j,k) : i \! < \! j \! > \! k \text{ or } i \! > \! j \! < \! k \}$.
 \end{example}

Each entry of the level $k$ log-signature is a degree $k$ polynomial in the $md$ entries $v_{ij}$ of the $m$ pieces $v_i \in \mathbb{R}^d$.
Hence, for piecewise linear paths, Theorem~\ref{thm:main} describes the real solutions to an (infinite) system of polynomial equations. 
In fact, the vanishing of finitely many levels of the log-signature suffice to conclude that the path is a straight line. 

\begin{corollary}
\label{cor:n2}
Given $n_1 \geq 1$ and $m \geq 2$, there exists a smallest integer $n_2 = n_2 ( n_1, m) \geq n_1$ such that 
there is no piecewise linear path with $m$ pieces, and no consecutive pieces collinear, whose log-signature vanishes at levels $n_1, n_1 + 1, \ldots, n_2$.
\end{corollary}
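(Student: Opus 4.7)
The plan is to combine Corollary~\ref{cor:2a} with a Noetherian finiteness argument. For the piecewise linear path $\gamma_{v_1} \star \cdots \star \gamma_{v_m}$, the signature entries at level $k$ are polynomials in the $md$ coordinates of $(v_1, \ldots, v_m)$, and the level-$k$ log-signature is a polynomial expression in the signature entries at levels $1, \ldots, k$. Hence each coordinate of the level-$k$ log-signature defines a polynomial $f_{k,\alpha} \in \RR[v_{ij}]$. For $N \geq n_1$, let $I_N \subseteq \RR[v_{ij}]$ be the ideal generated by $\{f_{k,\alpha} : n_1 \leq k \leq N, \alpha\}$, and let $V_N \subseteq (\RR^d)^m$ be its real zero locus --- the tuples whose log-signature vanishes at levels $n_1, \ldots, N$.

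The ideals form an ascending chain $I_{n_1} \subseteq I_{n_1+1} \subseteq \cdots$. By Hilbert's basis theorem, this chain stabilizes at some $n_2^{\ast} \geq n_1$, so $I_N = I_{n_2^{\ast}}$ for every $N \geq n_2^{\ast}$. In particular, each $f_{k,\alpha}$ with $k > n_2^{\ast}$ lies in $I_{n_2^{\ast}}$ and therefore vanishes on $V_{n_2^{\ast}}$. Consequently every $(v_1, \ldots, v_m) \in V_{n_2^{\ast}}$ has log-signature vanishing at \emph{all} levels $k \geq n_1$, so its log-signature is a Lie polynomial of degree strictly less than $n_1$.

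Now suppose in addition that no two consecutive $v_i$ are collinear (this already forces each $v_i \neq 0$, since the zero vector is collinear with everything). Then $\gamma_{v_1} \star \cdots \star \gamma_{v_m}$ is a piecewise linear path with $m$ pieces, no consecutive pieces collinear, and finite log-signature. Corollary~\ref{cor:2a} forces $m = 1$, contradicting the hypothesis $m \geq 2$. Hence no such tuple lies in $V_{n_2^{\ast}}$, and $n_2 := n_2^{\ast}$ meets the requirements; the smallest such $n_2$ then exists by well-ordering.

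The only problem-specific input is Corollary~\ref{cor:2a} (and through it, Theorem~\ref{thm:main}); the rest is a one-line application of the Hilbert basis theorem, enabled by the observation that log-signature entries of piecewise linear paths depend polynomially on the pieces. The only real caveat is that Hilbert's theorem is non-constructive: it produces an $n_2^{\ast}$ but no effective bound, so this argument yields no quantitative control on $n_2(n_1, m)$ in terms of $n_1$, $m$, or $d$.
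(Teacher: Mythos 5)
Your core argument is exactly the paper's: each log-signature entry of $\gamma_{v_1}\star\cdots\star\gamma_{v_m}$ is a polynomial in the $md$ coordinates $v_{ij}$, the ideals generated by the entries at levels $n_1,\dots,N$ form an ascending chain that stabilizes by the Hilbert basis theorem, stabilization forces every level $k\ge n_1$ to vanish on the common zero locus, and then Corollary~\ref{cor:2a} (no consecutive pieces collinear, $m\ge 2$) shows that locus contains no admissible tuple. That part is correct and needs no change.

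The one genuine gap is the dependence on the ambient dimension. The statement asserts $n_2=n_2(n_1,m)$, i.e.\ a bound independent of $d$, but your Noetherian argument takes place in the fixed polynomial ring $\RR[v_{ij}:1\le i\le m,\,1\le j\le d]$, so the stabilization level it produces is a priori a function $n_2(n_1,m,d)$. The paper closes this with a short extra step: if some path in $\RR^d$ with $m$ pieces and no consecutive pieces collinear had log-signature vanishing at levels $n_1,\dots,n_2$, then embedding it into $\RR^{d'}$ for $d'>d$ produces such a path in higher dimension, while for $d'<d$ (with $d'\ge 2$) a suitable (generic) linear projection keeps consecutive pieces non-collinear and, since a linear map of $\RR^d\to\RR^{d'}$ induces an algebra map under which log-signatures push forward, the projected path's log-signature also vanishes at those levels; hence the minimal $n_2$ is the same in every dimension and depends only on $n_1$ and $m$. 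Adding this transfer argument (or some substitute for it) is necessary to justify the notation $n_2(n_1,m)$; everything else in your write-up, including the caveat about non-effectivity, matches the paper.
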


\begin{proof}
Fix a piecewise linear path with pieces $v_1, \ldots, v_m \in \RR^d$. Each entry of the log-signature is a polynomial in $\mathbb{Q}[ v_{ij} : 1 \leq i \leq m, 1 \leq j \leq d]$.
Let $I_{n_1, k}$ be the ideal generated by the entries of log-signature at levels $n_1, n_1 + 1, \ldots, k$, for $k \geq n_1$.
As $k$ increases, we obtain $I_{n_1, k} \subseteq I_{n_1, k+1} \subseteq \cdots$, which stabilizes at some ideal $I_{n_1, n_2}$, by Noetherianity, see e.g.~\cite[Theorem 7 of \S5]{cox1994ideals}. The log-signature vanishes at level $n_1$ and above whenever the pieces lies in the vanishing locus of $I_{n_1, n_2}$. Hence the vanishing locus of $I_{n_1, n_2}$ contains no real $v_i$ with no consecutive pieces collinear, by Corollary~\ref{cor:2a}, since $m \geq 2$. 

We rule out dependence of $n_2$ on $d$. Assume there is a path in $\RR^d$ with $m$ pieces, no two consecutive pieces collinear, whose log-signature at levels $n_1, n_1 + 1, \ldots, n_2$ vanishes. Embedding the path into $\RR^{d'}$ for $d' > d$ gives a path with this property in $\RR^{d'}$. For $d' < d$, there exist projections of the path onto $\RR^{d'}$ in which consecutive increments remain non-collinear. The log-signature of the projection also vanishes at levels $n_1, n_1 + 1, \ldots, n_2$. Hence $n_2$ is a function of just $n_1$ and $m$.
\end{proof}

Corollary~\ref{cor:n2} suggests computing the required upper level $n_2$ for different starting levels $n_1$ and numbers of pieces $m$.  We leave the study of the function $n_2 = n_2(n_1, m)$ to future work and conclude this article with two examples.

\begin{example}[Two pieces]
Let $n_1 = 2$. The level two log-signature is $\frac12 [ v_1, v_2]$, which vanishes if and only if $v_1$ and $v_2$ are collinear. Hence $n_2(2,2) = 2$: any path with two pieces whose log-signature vanishes at level two is a straight line. Now we consider $n_1 = 3$. The level three log-signature is
 $$ [v_1, [v_1,v_2]] + [v_2, [v_2,v_1]] ,$$
which vanishes if and only if $v_1, v_2$ are collinear. Hence $n_2(3,2) = 3$: no piecewise linear path with two non-collinear pieces has vanishing level three log-signature.
\end{example}
 
\begin{example}[Three pieces]
Let $n_1 = 2$. 
There exist paths with three pieces, no consecutive pieces collinear, whose level two signature vanishes. For example, let
$$ v_1 = \begin{bmatrix} 1 \\ 1 \end{bmatrix} , \quad v_2 = \begin{bmatrix} 1 \\ -1 \end{bmatrix} , \quad v_3 = \begin{bmatrix} a \\ 1 \end{bmatrix}, $$
for any $a \in \mathbb{R}$. Hence $n_2(2,3) \geq 3$.
Setting~\eqref{eqn:level2} and~\eqref{eqn:level3} to zero gives the ideal $I_{2,3}$ from the proof of Corollary~\ref{cor:n2}. 
A Macualay2~\cite{M2} computation shows that $I_{2,3}$
has three components: $v_1 + v_2 = 0$, $v_3 + v_2 = 0$ and $v_1, v_2, v_3$ all parallel. 
In all three components, there exist two consecutive pieces that are collinear. In the first two components, the path is not reduced and the corresponding reduced path is a straight line. In the third component, the path is a straight line. Hence all components lie in $I_{2,k}$ for all $k \geq 2$, and therefore $n_2(2,3) = 3$. Next we consider $n_1 = 3$. Setting~\eqref{eqn:level3} to zero reveals that the third order log-signature vanishes for paths with pieces $v_1, v_2, v_3$ that satisfy $v_1 + 3 v_2 + v_3 = 0$. Hence $n_2(3,3) \geq 4$.
\end{example}

\section*{Acknowledgements}

PFK acknowledges support from the German science foundation (DFG) via the cluster of excellence MATH+, via a MATH+ Distinguished Fellowship, and DFG Research Unit FOR2402 `Rough paths, stochastic partial differential equations and related topics'. He thanks Horatio Boedihardjo for helpful discussions related to this paper.
TL was funded in part by the EPSRC [grant number EP/S026347/1], in part by The Alan Turing Institute under the EPSRC grant EP/N510129/1, the Data Centric Engineering Programme (under the Lloyd’s Register Foundation grant G0095), the Defence and Security Programme (funded by the UK Government) and the Office for National Statistics \& The Alan Turing Institute (strategic partnership) and in part by the Hong Kong Innovation and Technology Commission (InnoHK Project CIMDA).
AS was supported by the Society of Fellows at Harvard University.

\bibliographystyle{alpha}
 \bibliography{references}

\end{document}